\newtheorem{theorem}{Theorem}[section]
\newtheorem{lemma}[theorem]{Lemma}
\theoremstyle{definition}
\newtheorem{definition}[theorem]{Definition}
\newtheorem{example}[theorem]{Example}
\theoremstyle{remark}
\newtheorem{remark}[theorem]{Remark}
\numberwithin{equation}{section}
\def\DJ{\leavevmode\setbox0=\hbox{D}\kern0pt\rlap
	{\kern.04em\raise.188\ht0\hbox{-}}D}
\begin{document}
	\title[Convergence of $C-$class Enriched Contraction]{Convergence of $C-$class Enriched Hardy-Rogers Contractions in Some Banach Spaces}
	\author[A.H. Ansari,]{Arsalan Hojjat Ansari$^{1,2.3}$}
	\address{\textbf{A.H. Ansari} $^{1}$ Department of Mathematics and Applied
		Mathematics, Sefako Makgatho Health Sciences University, Ga-Rankuwa,
		Pretoria, Medunsa-0204, South Africa\\
		$^{2}$Department of Mathematics, Karaj Branch, Islamic Azad University,
		Karaj, Iran\\
		$^{3}$Department of Mathematics, Payame Noor University, Tehran, Iran}
	\email{analsisamirmath2@gmail.com,mathanalsisamir4@gmail.com}
	\author[O.J.Omidire,]{Olaoluwa Jeremiah Omidire$^{4}$}
	\address{\textbf{O.J. Omidire}$^{4}$ Department of Mathematical Sciences,
		Osun State University, Osogbo, Nigeria.}
	\email{olaoluwa.omidire@uniosun.edu.ng, omidireolaoluwa@gmail.com}
	\subjclass[2010]{Primary 47H10; Secondary 54H25.}
	\keywords{enriched Jungck-type Hardy-Rogers $C-$%
		class contraction, enriched Hardy-Rogers $C-$class
		contraction}
	\maketitle
	
	\begin{abstract}
		~~	\\
		The applicability of classical Banach contraction mapping principle in solving diverse problems caught the attention of  several researchers in various fields of science and engineering. Since its introduction, many extensions and generalizations of this principle continue to emerge in different directions. Some are seen in form of common fixed point using commuting mappings, A-contractions, rational-type contractions, enriched contractions amongst others. In this paper, {\it{C-class}} function is used to generalizing enriched Hargy-Rogers contractions. Convergence of unique fixed point and common fixed points of these general class of mappings were established using some related iterative schemes. Results obtained are generalizations of some recently announced related results in literature.
	\end{abstract}
	
	\setcounter{page}{1}
	
	\centerline{}
	
	\centerline{}
	
	\bigskip
	
	\section{Background}
	Diverse real life problems in Mathematics, Engineering and several other fields may be formulated as Mathematical equation, such as systems of non-linear equations, optimization theory, differential equation amongst others. These problems may be solved using fixed point approach.
	\par If $\bf{Z}$ is a nonempty set and $P:\bf{Z} \to \bf{Z}.$ Any element $c \in \bf{Z}$ is a fixed point of $P,$ if $P(c)=c$ and denote the set of all fixed points of $P$ by $F_{P}=\{c \in \bf{Z} : P(c)=c\}.$\\
	Given a complete metric space $\bf{Z}$ and a continuous self-map $P$ on $\bf{Z}$ such that
	\begin{equation}
		d(Pc,Pe)\leq a.d(c,e),\text{ \ \ \ \ }\forall~~
		c,e \in \bf{Z}  \label{1}
	\end{equation}%
	with $a\in \lbrack 0,1)$ fixed; Banach\cite{1}, established that $P$ has a unique fixed point in $\bf{Z}.$
	In generalizing this, Rakotch \cite{16} proved that the conclusion of the above claim is valid, if given monotone decreasing function \ $a:(0,\infty )\rightarrow \lbrack 0,1)$ such
	that, for each $c,e\in \bf{Z}, c\neq e$,%
	\begin{equation}
		d(Pc,Pe)\leq a(d(c,e)).  \label{2}
	\end{equation}%
	Also, Kannan\cite{9} proved that $P$ need not be continuous to have a unique fixed point using the inequality:
	$ \exists~~ a\in \lbrack 0,1/2)$ such that%
	\begin{equation}
		d(Pc,Pe)\leq a[d(c,Pc)+d(e,Pe)],\text{\ \ \ \ \ \ \ \ \ \ \ \ \ \ \ \ }%
		\forall c,e\in \bf{Z}.  \label{3}
	\end{equation}%
	Several generalizations and extensions of (\ref{1}) are available in literature. Meanwhile, in 1976, Jungck \cite{8} established the notion of common fixed point of mappings. He proved that the 	pair of commuting self-mappings $S, P$ on a complete metric space $(\bf{Z}, d)$ satisfying
	\begin{equation}
		d(Pc,Pe)\leq ad(Sc,Se),\text{\ \ \ \ \ \ \ }\forall ~~~c,e\in
		\bf{Z}, a\in \lbrack 0,1),  \label{4}
	\end{equation}%
	have a unique common fixed point in $\bf{Z}.$
	
	\begin{definition}
		\cite{ol} \textquotedblleft Let X be a non-empty set. Two mappings $S,T:M\rightarrow M$ are
		said to commute iff $ST=TS$.\textquotedblright
	\end{definition}
	
	\section{method}
	In \cite{Ans1}, the concept of $C-$class functions was introduced.
	\begin{definition}
		\label{def:1.1}\cite{Ans1} \textquotedblleft A mapping $G:[0,\infty )^{2}\rightarrow \mathbb{R}
		$ is called \textit{$C-$class} function if it is continuous and satisfies
		the following axioms:
		
		(1) $G(s,t)\leq s$;
		
		(2) $G(s,t)=s$ implies that either $s=0$ or $t=0$; for all $s,t\in \lbrack
		0,\infty )$.\textquotedblright
	\end{definition}
	
	Note for some $G$ we have that $G(0,0)=0$.\newline
	We denote the set of $C-$class functions by $\mathcal{C}$.
	See \cite{Ans1} for examples.
\begin{definition} \label{def:1.2}\cite{Khan} \textquotedblleft A function $\psi:[0,\infty)\rightarrow [0,\infty)$
		is called an altering distance function if the following properties are
		satisfied:
		
		$(i)$ $\psi $ is non-decreasing and continuous,
		
		$(ii)$ $\psi(t)=0$ if and only if $t=0$.\textquotedblright
\end{definition}
	
	We denote the set of altering distance functions by $\Psi $.
	
	\begin{definition}
		\label{def:1.3}\cite{Ans1} \textquotedblleft Let $\Phi _{u}$ denote the class of the functions 
		$\varphi:[0,\infty)\rightarrow [0,\infty)$ which satisfy the following
		conditions:
		
		$(i)$ $\varphi$ is continuous;
		
		$(ii)$ $\varphi (t)>0,t>0$ and $\varphi (0)\geq 0$.\textquotedblright
	\end{definition}
	
	\begin{definition}
		\label{def:1.4} \cite{Ans1} \textquotedblleft A tripled $(\psi ,\varphi ,G)$ where $\psi \in \Psi ,$ $\varphi
		\in \Phi _{u}$ and $G\in \mathcal{C}$ is said to be monotone if for any $%
		x,y\in \left[ 0,\infty \right) $%
		\begin{equation*}
			x\leqslant y\Longrightarrow G(\psi (x),\varphi (x))\leqslant G(\psi
			(y),\varphi (y)). \textquotedblright
		\end{equation*} 
	\end{definition}
	
	\begin{example}
		\cite{Ans1} \textquotedblleft Let $G(s,t)=s-t,\phi (x)=\sqrt{x}$\textrm{%
			\begin{equation*}
				\psi (x)=%
				\begin{cases}
					\sqrt{x} & \text{if }0\leq x\leq 1, \\ 
					x^{2}, & \text{if x}>1%
				\end{cases}%
				,
			\end{equation*}%
		}then $(\psi ,\phi ,G)$ is monotone.\textquotedblright
	\end{example}
	
	\begin{example}
		\cite{Ans1} \textquotedblleft Let $G(s,t)=s-t,\phi (x)=x^{2}$\textrm{%
			\begin{equation*}
				\psi (x)=%
				\begin{cases}
					\sqrt{x} & \text{if }0\leq x\leq 1, \\ 
					x^{2}, & \text{if x}>1%
				\end{cases}%
				,
			\end{equation*}%
		}then $(\psi ,\phi ,G)$ is not monotone. \textquotedblright
	\end{example}
	
	\begin{lemma}
		\label{lem:2.1} \cite{Chuadchawna} \textquotedblleft Let $E$ be a Hilbert $C^{\ast}$-module
		and $\{x_{n}\} $ be a sequence in $E$ such that $\left\Vert
		x_{n}-x_{n+1}\right\Vert \rightarrow 0$ as $n\rightarrow +\infty $. If $%
		\{x_{n}\}$ is not a Cauchy sequence then there exist an $\varepsilon >0$ and
		sequences of positive integers $\{m(k)\}$ and $\{n(k)\}$ with $m(k)>n(k)>k$
		such that:
		
		$(i)$ $\lim_{k\rightarrow +\infty }\left\Vert
		x_{m(k)-1}-x_{n(k)+1}\right\Vert=\varepsilon $;
		
		$(ii)$ $\lim_{k\rightarrow +\infty }\left\Vert
		x_{m(k)}-x_{n(k)}\right\Vert=\varepsilon ;$
		
		$(iii)$ $\lim_{k\rightarrow +\infty }\left\Vert
		x_{m(k)-1}-x_{n(k)}\right\Vert=\varepsilon;$
		
		$(iv)$ $\lim_{k\rightarrow +\infty }\left\Vert
		x_{m(k)+1}-x_{n(k)+1}\right\Vert=\varepsilon;$
		
		$(v)$ $\lim_{k\rightarrow +\infty }\left\Vert x_{m(k)}-x_{n(k)-1}\right\Vert
		=\varepsilon.$\textquotedblright
	\end{lemma}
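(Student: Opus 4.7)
The plan is to extract a witness $\varepsilon>0$ from the failure of the Cauchy property, build the indices $m(k)$ by a minimality selection so that we get a two-sided control on $\|x_{m(k)}-x_{n(k)}\|$, and then derive the other four limits from (ii) and (iii) by repeated applications of the triangle inequality together with the hypothesis $\|x_n-x_{n+1}\|\to 0$.

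First I would negate the Cauchy condition: there exists $\varepsilon>0$ such that for every $k\in\mathbb N$ one can find integers $p>q>k$ with $\|x_p-x_q\|\ge\varepsilon$. Fixing such an $\varepsilon$, for each $k$ I would pick the smallest index $n(k)>k$ for which some $p>n(k)$ satisfies the inequality above, and then let $m(k)$ be the \emph{smallest} integer $>n(k)$ with $\|x_{m(k)}-x_{n(k)}\|\ge\varepsilon$. The point of the minimality of $m(k)$ is that $\|x_{m(k)-1}-x_{n(k)}\|<\varepsilon$; this is the only nonroutine step in the proof, since without it one only gets a lower bound $\varepsilon$ on the relevant norms and cannot conclude that the limits are exactly $\varepsilon$.

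Using the triangle inequality,
\[
\varepsilon\le\|x_{m(k)}-x_{n(k)}\|\le\|x_{m(k)}-x_{m(k)-1}\|+\|x_{m(k)-1}-x_{n(k)}\|<\|x_{m(k)}-x_{m(k)-1}\|+\varepsilon,
\]
so letting $k\to\infty$ and using $\|x_{m(k)}-x_{m(k)-1}\|\to 0$ yields $\|x_{m(k)}-x_{n(k)}\|\to\varepsilon$, which is (ii). The same chain shows $\|x_{m(k)-1}-x_{n(k)}\|\to\varepsilon$, which is (iii).

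Finally, for (i), (iv) and (v) I would apply the reverse triangle inequality $\bigl|\|a-b\|-\|c-d\|\bigr|\le\|a-c\|+\|b-d\|$ (valid in any normed space, hence in the Hilbert $C^*$-module $E$). Concretely,
\[
\bigl|\|x_{m(k)-1}-x_{n(k)+1}\|-\|x_{m(k)-1}-x_{n(k)}\|\bigr|\le\|x_{n(k)+1}-x_{n(k)}\|\to 0
\]
combined with (iii) gives (i); the estimate
\[
\bigl|\|x_{m(k)+1}-x_{n(k)+1}\|-\|x_{m(k)}-x_{n(k)}\|\bigr|\le\|x_{m(k)+1}-x_{m(k)}\|+\|x_{n(k)+1}-x_{n(k)}\|\to 0
\]
together with (ii) gives (iv); and a single-index shift on $n(k)$, combined again with (ii), gives (v). Once the selection of $m(k)$ is made correctly, the rest of the argument is purely mechanical, so the selection by minimality is the real content of the lemma.
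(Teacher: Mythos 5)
The paper does not prove this lemma at all: it is quoted verbatim from \cite{Chuadchawna} and used as a black box, so there is no in-paper argument to compare against. Your proof is the standard one for results of this type and is correct: the minimality selection of $m(k)$, which yields the two-sided bound $\varepsilon \le \|x_{m(k)}-x_{n(k)}\| < \|x_{m(k)}-x_{m(k)-1}\| + \varepsilon$, is indeed the essential step, and the remaining limits (i), (iv), (v) follow mechanically by index shifts and the estimate $\bigl|\,\|a-b\|-\|c-d\|\,\bigr|\le\|a-c\|+\|b-d\|$, which uses only the normed-space structure of $E$.
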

	
	\begin{definition}
		\label{def:1.32024+328+1}\cite{ol,chen}  \textquotedblleft Let S and f be two self mappings of
		a nonempty subset M of a normed linear space N. Then $(S,f)$ is a Banach
		operator pair, if any one of the following conditions is satisfied:
		
		$(i)$ $S(Ff)\subseteq Ff$ (i.e) $Ff$ is S-invariant.
		
		$(ii)$ $fSx=Sx$ for each $x\in Ff$ .
		
		$(iii)$ $fSx=Sfx$ for each $x\in Ff$ .
		
		$(iv)$ $||Sfx-fx||\leq k||fx-x||$ for some $k\geq 0.$\textquotedblright
	\end{definition}
	
	\begin{definition}
		\label{def:1.32024+328+2}\cite{ol} \textquotedblleft Let $(N,||.||)$ be a normed linear space.
		A mapping $f:N\rightarrow N$ is said to be an enriched Hardy-Rogers
		contraction if for any $\delta \in \lbrack 0,\infty )$ and non-negative $%
		c_{i}\in \mathbb{R}_+$, ($i=1,2,$\textperiodcentered $\ $\textperiodcentered $\ $%
		\textperiodcentered $\ ,5$) with$\sum_{i=1}^{5}c_{i}<1$, such that $\forall
		u,v\in N$ we have
		\begin{equation}
			||\delta (u-v)+fu-fv||\leq
			c_{1}||u-v||+c_{2}||u-fu||+c_{3}||u-fv||+c_{4}||v-fu||+c_{5}||v-fv||.\textquotedblright
			\label{5} 
		\end{equation}
	\end{definition}
	
	\begin{definition}
		\label{def:1.32024+328+3}\cite{ol} \textquotedblleft Let $(N,||.||)$ be a normed linear space.
		A mapping $f:N\rightarrow N$ is said to be an enriched Jungck-type
		Hardy-Rogers contraction if for any $\delta \in \lbrack 0,\infty )$ and
		non-negative $c_{i}\in \mathbb{R}_+$, ($i=1,2,$\textperiodcentered $\ $%
		\textperiodcentered $\ $\textperiodcentered $\ ,5$) with$%
		\sum_{i=1}^{5}c_{i}<1$, there is a mapping $S:N\rightarrow N$ which commutes
		with operator f, such that $\forall u,v\in N$ we have\textrm{%
			\begin{equation}
				||\delta (Su-Sv)+fu-fv||\leq
				c_{1}||Su-Sv||+c_{2}||Su-fu||+c_{3}||Su-fv||+c_{4}||Sv-fu||+c_{5}||Sv-fv||. \textquotedblright
				\label{6}
			\end{equation}%
		}
	\end{definition}
	\begin{definition} \cite{24,25}
	\textquotedblleft A mapping $S: \mathbb{R}^n \mapsto \mathbb{R}$ is said to be convex if for any $x,y \in \mathbb{R}^n$ and $\aleph \in [0,1]$ we have that
	$$S(x+\aleph(y-x)) \leq \aleph S(x) + (1-\aleph)Sy.\textquotedblright$$
	\end{definition}
	\begin{theorem}\cite{ol} \textquotedblleft Let $(B,||.||)$ be a real Banach space and $f:B\rightarrow B$ a
		convex mapping satisfying definition \ref{def:1.32024+328+2}. Then, f has a
		unique fixed point u in B.\textquotedblright
	\end{theorem}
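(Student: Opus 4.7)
The plan is to follow the averaging strategy for enriched contractions. Given $\delta\in[0,\infty)$ and coefficients $c_1,\dots,c_5\ge 0$ with $\sum c_i<1$ from (\ref{5}), set $\lambda = 1/(1+\delta)\in(0,1]$ and introduce the averaged operator
\[
f_\lambda(x) := (1-\lambda)x + \lambda f(x), \qquad x\in B.
\]
Since $\lambda>0$ one has $\mathrm{Fix}(f) = \mathrm{Fix}(f_\lambda)$, and the identity $f_\lambda u - f_\lambda v = \lambda[\delta(u-v)+fu-fv]$ (valid because $(1-\lambda)/\lambda=\delta$) rewrites (\ref{5}) as
\[
\|f_\lambda u-f_\lambda v\|\le \lambda\bigl[c_1\|u-v\|+c_2\|u-fu\|+c_3\|u-fv\|+c_4\|v-fu\|+c_5\|v-fv\|\bigr].
\]
So the task reduces to producing a unique fixed point of $f_\lambda$, which I pursue via the Krasnoselskij iteration $x_{n+1}=f_\lambda(x_n)$ from an arbitrary $x_0\in B$.

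Next I use the explicit relation $fx_n = \tfrac{1}{\lambda}x_{n+1} - \tfrac{1-\lambda}{\lambda}x_n$ to convert the norms appearing in the contraction condition into differences of successive iterates. A direct computation gives
\[
\|x_n-fx_n\| = \tfrac{1}{\lambda}\|x_n-x_{n+1}\|, \qquad \|x_n-fx_{n-1}\| = \tfrac{1-\lambda}{\lambda}\|x_{n-1}-x_n\|,
\]
together with the triangle-inequality estimate $\|x_{n-1}-fx_n\|\le\|x_{n-1}-x_n\|+\tfrac{1}{\lambda}\|x_n-x_{n+1}\|$. Plugging these into the reformulated condition with $u=x_n$, $v=x_{n-1}$ and collecting $\|x_{n+1}-x_n\|$ on the left yields
\[
(1-c_2-c_4)\,\|x_{n+1}-x_n\| \le \bigl[\lambda(c_1+c_4)+(1-\lambda)c_3+c_5\bigr]\|x_n-x_{n-1}\|.
\]
A symmetric companion estimate is obtained by swapping $u\leftrightarrow v$; retaining whichever bound gives the smaller ratio produces $\|x_{n+1}-x_n\|\le q\,\|x_n-x_{n-1}\|$ for some $q\in[0,1)$.

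Once the contraction estimate is in place, iterating gives $\|x_{n+1}-x_n\|\le q^n\|x_1-x_0\|$, so a geometric series argument shows $\{x_n\}$ is Cauchy; by completeness of $B$, $x_n\to u^*\in B$. To verify $u^*$ is fixed, I apply the rewritten contraction condition with $u=x_n$, $v=u^*$, note that $\|x_n-x_{n+1}\|\to 0$ and $x_n\to u^*$, and pass to the limit using continuity of the norm to obtain $(1-c_3-c_5)\|u^*-f_\lambda u^*\|\le 0$; since $c_3+c_5<1$ this forces $u^*=f_\lambda(u^*)=f(u^*)$. For uniqueness, two fixed points $u^*,v^*$ reduce (\ref{5}) to
\[
(\delta+1)\|u^*-v^*\| \le (c_1+c_3+c_4)\|u^*-v^*\|,
\]
and since $c_1+c_3+c_4<1\le\delta+1$ this forces $u^*=v^*$.

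The main obstacle I anticipate is establishing $q<1$ in the contraction estimate. Because $c_3$ and $c_4$ enter (\ref{5}) asymmetrically, the naive one-sided bound produces a quantity of the form $c_2+c_3+c_4+c_5+\lambda(c_1+c_4-c_3)$ that need not lie below $1$ purely from $\sum_i c_i<1$. The trick is to exploit the freedom to swap $u\leftrightarrow v$ so that whichever of $c_3,c_4$ is larger only appears multiplied by $\lambda\le 1$; the convexity hypothesis on $f$ is then invoked to ensure the averaged iterates interact correctly with the enriched inequality. Once $q<1$ is secured, the remaining Cauchy/limit-passage/uniqueness steps are entirely routine.
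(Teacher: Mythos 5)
Your proposal is correct, and it is worth noting first that the paper itself gives no proof of this particular statement: it is quoted from \cite{ol}, and the only argument in the text is for the $C$-class generalization (the first theorem of Section 4, where $\sum_{i}c_{i}=1$ rather than $\sum_{i}c_{i}<1$). Measured against that argument, your route is the same in outline --- averaged operator $f_{\lambda}$ with $\lambda=1/(1+\delta)$, Krasnoselskij/Schaefer iteration, reduction of the Hardy--Rogers terms to successive differences, then Cauchy, limit passage, and uniqueness --- but it differs, to its advantage, at the one genuinely delicate step. The paper disposes of the $c_{3}$-versus-$c_{4}$ asymmetry by declaring that ``without loss of generality we can choose $c_{3}=c_{4}$,'' which is not legitimate: the $c_{i}$ are data of the hypothesis, not parameters at the prover's disposal. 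Your fix --- applying (\ref{5}) to both orderings $(u,v)=(x_{n},x_{n-1})$ and $(u,v)=(x_{n-1},x_{n})$ and keeping whichever bound is sharper --- is rigorous: a short computation shows that at least one of the two ratios is strictly below $1$ whenever $\sum_{i}c_{i}<1$, since the surplus $\lambda$-term can always be arranged to fall on the smaller of $c_{3},c_{4}$, so $q<1$ is secured. Your use of the exact identity $f_{\lambda}u-f_{\lambda}v=\lambda[\delta(u-v)+fu-fv]$ also sidesteps the paper's questionable step $f_{c}^{2}u=(1-c)fu+cf^{2}u$, which conflates $f_{c}(f_{c}u)$ with $f_{c}(fu)$; in fact convexity of $f$ plays no role in your argument, and your closing remark that convexity is ``invoked to ensure the averaged iterates interact correctly'' can simply be deleted. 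Finally, because $\sum_{i}c_{i}<1$ here, your geometric-series Cauchy argument replaces the heavier machinery (monotone limit $r$, $C$-class functions, the non-Cauchy subsequence lemma) that the paper needs for the borderline case $\sum_{i}c_{i}=1$.
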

	
	\begin{theorem}
		\cite{ol} \textquotedblleft Let $(B,||.||)$ be a real Banach space and $(f,S):B\rightarrow B$
		be commuting convex Banach operator pair satisfying definition \ref%
		{def:1.32024+328+3}. If $f(B)\subseteq S(B)$, then $f$ and $S$ have a unique
		common fixed point u in B.\textquotedblright
	\end{theorem}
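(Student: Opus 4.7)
The plan is to reduce the enriched Jungck-type Hardy--Rogers inequality (\ref{6}) to an ordinary Jungck Hardy--Rogers contraction for a Krasnoselskij-averaged operator, and then run the classical Jungck iteration. Set $\lambda := 1/(\delta+1)\in(0,1]$ (so $\delta=(1-\lambda)/\lambda$) and define $T_\lambda := (1-\lambda)S + \lambda f$. Multiplying (\ref{6}) through by $\lambda$ rewrites its left-hand side as $\|T_\lambda u - T_\lambda v\|$, yielding a Jungck Hardy--Rogers-type inequality for the pair $(T_\lambda,S)$. Crucially, any $p\in B$ with $Sp = T_\lambda p = p$ satisfies $\lambda f p = \lambda p$ and hence $fp=p$, so the problem reduces to exhibiting a common fixed point of $S$ and $T_\lambda$.

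Fix $x_0\in B$. Using $f(B)\subseteq S(B)$ together with the commuting, convex Banach-operator-pair structure---which keeps convex combinations of $Sx_n$ and $fx_n$ inside $S(B)$---one defines inductively the Jungck--Krasnoselskij iterates $Sx_{n+1} = (1-\lambda)Sx_n + \lambda fx_n = T_\lambda x_n$. Set $d_n := \|Sx_{n+1} - Sx_n\|$. The recurrence gives $\|Sx_n-fx_n\| = d_n/\lambda$, $\|Sx_n-fx_{n-1}\| = (1-\lambda)d_{n-1}/\lambda$, and a triangle bound $\|Sx_{n-1}-fx_n\| \leq d_{n-1}+d_n/\lambda$. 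Applying the rescaled inequality at $(u,v)=(x_n,x_{n-1})$ and also at $(u,v)=(x_{n-1},x_n)$---which permutes $c_2\leftrightarrow c_5$ and $c_3\leftrightarrow c_4$---and averaging yields
$$d_n \;\leq\; \frac{\lambda c_1 + \tfrac12(c_2+c_3+c_4+c_5)}{1-\tfrac12(c_2+c_3+c_4+c_5)}\,d_{n-1}.$$
Because $\sum_{i=1}^{5}c_i<1$ and $\lambda\leq 1$, this factor is strictly less than $1$, so $\{Sx_n\}$ is Cauchy in the complete space $B$ and converges to some $z\in B$; the recurrence then also forces $fx_n\to z$.

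It remains to identify $z$ as a common fixed point. Since $\{fx_n\}\subseteq f(B)\subseteq S(B)$ and $F_f$ is $S$-invariant (Definition~\ref{def:1.32024+328+1}(i)), the convex-Banach-operator-pair hypothesis supplies $p\in B$ with $Sp=z$. Applying (\ref{6}) at $(u,v)=(p,x_n)$ and letting $n\to\infty$: the left-hand side tends to $\|fp-z\|=\|fp-Sp\|$ while the right-hand side tends to $(c_2+c_4)\|fp-Sp\|$, and since $c_2+c_4<1$ we obtain $fp=Sp=z$. Commutativity then gives $Sz = S(fp)=f(Sp)=fz$, and a final application of (\ref{6}) at $(u,v)=(z,p)$---where $Sp=fp=z$ and $Sz=fz$---collapses to
$$(\delta+1)\|Sz-z\| \;\leq\; (c_1+c_3+c_4)\|Sz-z\|,$$
whence $Sz=z$ (as $c_1+c_3+c_4<1\leq\delta+1$), and therefore $fz=Sz=z$. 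Uniqueness is immediate: if $u,v$ both satisfy $fu=Su=u$ and $fv=Sv=v$, (\ref{6}) reduces to $(\delta+1)\|u-v\|\leq(c_1+c_3+c_4)\|u-v\|$, forcing $u=v$.

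The main obstacle is the passage from the limit relations $Sx_n\to z$, $fx_n\to z$ to a bona fide preimage $p$ with $Sp=z$: absent an explicit closedness hypothesis on $S(B)$, this is precisely where the convex-Banach-operator-pair hypothesis must do real work, tying together commutativity, $S$-invariance of $F_f$, and the convex combinations underpinning the Krasnoselskij averaging. Once $p$ is in hand, the Cauchy estimate (via the symmetrization trick) and the two closing applications of the contraction are essentially routine bookkeeping.
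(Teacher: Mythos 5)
The paper never actually proves this statement---it is quoted verbatim from \cite{ol}---so the only available comparison is with the paper's proof of its own generalization, Theorem 4.2. Your skeleton coincides with that proof's: pass to the averaged operator $T_\lambda=(1-\lambda)S+\lambda f$ (the paper's $f_c$), run the Jungck--Schaefer/Krasnoselskij iteration $Sx_{n+1}=T_\lambda x_n$, extract a Cauchy sequence, and identify the limit as the common fixed point. Your symmetrization trick (applying the inequality at $(u,v)$ and at $(v,u)$ and averaging) is a genuine improvement: it yields an honest contraction factor $\bigl(\lambda c_1+\tfrac12(c_2+c_3+c_4+c_5)\bigr)/\bigl(1-\tfrac12(c_2+c_3+c_4+c_5)\bigr)<1$ using only $\sum_{i=1}^5 c_i<1$, whereas the paper's estimate culminates in $\|f_cu-f_cv\|\le\|Su-Sv\|$ (factor exactly $1$, reached via the unjustified step ``without loss of generality we can choose $c_3=c_4$'') and then leans on the $C$-class machinery to force the relevant limit to zero. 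For the quoted theorem, which carries no $\psi,\varphi,G$, your strict contraction is what actually makes the Cauchy argument go through; your closing computations (the limit argument giving $fp=Sp=z$, commutativity giving $Sz=fz$, the collapse to $(\delta+1)\|Sz-z\|\le(c_1+c_3+c_4)\|Sz-z\|$, and uniqueness) all check out.

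There are, however, two genuine gaps, one of which you flag yourself. First, the iterates are not obviously well defined: to solve $Sx_{n+1}=(1-\lambda)Sx_n+\lambda fx_n$ you need the convex combination to lie in $S(B)$, and $f(B)\subseteq S(B)$ only guarantees $fx_n\in S(B)$; some convexity or invariance of $S(B)$ must be invoked explicitly (the paper's proof of Theorem 4.2 silently has the same issue). Second, and more seriously, your production of $p$ with $Sp=z$ does not follow from the stated hypotheses. The Banach-operator-pair condition of Definition \ref{def:1.32024+328+1} is $S(F_f)\subseteq F_f$, a statement about fixed points of $f$---which you do not yet possess at that stage---and says nothing about $S(B)$ being closed or about $z$ having an $S$-preimage. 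Without closedness of $S(B)$ (or continuity of $S$ together with a separate argument in the style of Jungck's original theorem), the limit $z$ of $\{fx_n\}\subseteq S(B)$ need not lie in $S(B)$, and the entire identification step collapses. You correctly locate this as the crux, but asserting that the convex-Banach-operator-pair hypothesis ``must do real work'' there is not a proof; as written, the argument is incomplete at precisely that point.
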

	\begin{lemma} \cite{22,23}
	\label{lem:1.32024+328+4} \textquotedblleft Given $K$ a convex subset of a normed linear space $N$
	and $f: K \to K.$ Then, $f_c: K \to K$ given by 
	$$f_c(u) = (1 - c)u + cfu ~~~~\forall ~u \in N, ~~c \in [0,1]$$
	is called averaged mapping and it has the following property:
	$Fix (f_c) = Fix (f).$\textquotedblright
	\end{lemma}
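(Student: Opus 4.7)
The plan is to establish $\mathrm{Fix}(f_c)=\mathrm{Fix}(f)$ by a direct double-inclusion argument, working purely from the affine formula $f_c(u)=(1-c)u+cf(u)$. No contractivity, continuity, or completeness assumption is needed: the fixed point equation for $f_c$ rearranges algebraically into the fixed point equation for $f$, so the whole lemma reduces to a short manipulation.

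For the forward inclusion $\mathrm{Fix}(f)\subseteq \mathrm{Fix}(f_c)$, I take $u\in K$ with $f(u)=u$ and substitute into the definition to obtain $f_c(u)=(1-c)u+c\,f(u)=(1-c)u+cu=u$, so $u\in \mathrm{Fix}(f_c)$. This direction uses only the defining formula and holds for every $c\in[0,1]$. For the reverse inclusion $\mathrm{Fix}(f_c)\subseteq \mathrm{Fix}(f)$, I assume $f_c(u)=u$; then $u=(1-c)u+c\,f(u)$ simplifies to $c\,(f(u)-u)=0$, and for any $c\in(0,1]$ this forces $f(u)=u$, giving the inclusion.

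The only subtlety is the endpoint $c=0$: there $f_c$ collapses to the identity map, so $\mathrm{Fix}(f_c)=K$ and the equality breaks unless $f$ is itself the identity. Thus the statement must be read with $c\in(0,1]$, matching the standard formulation in \cite{22,23}. The convexity hypothesis on $K$ is invoked exclusively to guarantee that $f_c$ actually sends $K$ into $K$, so that $\mathrm{Fix}(f_c)$ is well defined as a subset of $K$; it plays no role in the fixed point identity itself. Consequently there is no real obstacle in the proof beyond flagging the degenerate case $c=0$, and the argument is essentially a one-line rearrangement.
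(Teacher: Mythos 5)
Your proof is correct and is the standard argument; note that the paper itself states this lemma without proof, importing it verbatim from \cite{22,23}, so there is no in-paper proof to compare against. Your double-inclusion computation is exactly what one would write, and your observation about the degenerate endpoint $c=0$ (where $f_c$ is the identity and $\mathrm{Fix}(f_c)=K$) is a genuine and worthwhile correction to the statement as quoted with $c\in[0,1]$; the equality $\mathrm{Fix}(f_c)=\mathrm{Fix}(f)$ only holds for $c\in(0,1]$, which is also the range actually used later in the paper via $c=\frac{1}{1+\delta}$.
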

	
	\par The $C-$ class function will be used in generalizing enriched Hardy-Rogers contraction and enriched Jungck-type Hardy-Rogers contraction which is a generalization and extension of Banach, Jungck, Kannan, Hardy-Rogers contractive definitions and many more in literature. Also, some iterative procedure together with some basic analysis tools such as continuity, convexity, triangle inequality and many more were employed in establishing our results.
	
	\section{Preliminary Results}
	
	We introduce the following definitions which are generalization of  definitions \ref{def:1.32024+328+2} and \ref{def:1.32024+328+3} and many more in literature.
	
	\begin{definition}
		\label{def:1.32024+328+4}Let $(\bf{Z},||.||)$ be a normed linear space. A mapping 
		$P:\bf{Z}\rightarrow \bf{Z}$ is said to be an enriched Hardy-Rogers $C-$class
		contraction if for any $\delta \in \lbrack 0,\infty )$ and non-negative $%
		c_{i}\in \mathbb{R}_+$, ($i=1,2,$\textperiodcentered $\ $\textperiodcentered $\ $%
		\textperiodcentered $\ ,5$) with$\sum_{i=1}^{5}c_{i}=1$, such that $\forall~
		u,v\in \bf{Z}$ we have\textrm{%
			\begin{eqnarray}
				\psi (||\delta (u-v)+Pu-Pv||) &\leq &G\Big(\psi
				\big(c_{1}||u-v||+c_{2}||u-Pu||+c_{3}||u-Pv||\notag \\
				&+&c_{4}||v-Pu||+c_{5}||v-Pv||\big), \notag
				\label{4n} \\
				&&\varphi
				\big(c_{1}||u-v||+c_{2}||u-Pu||+c_{3}||u-Pv||\notag \\
				&+&c_{4}||v-Pu||+c_{5}||v-Pv||\big)\Big).
			\end{eqnarray}%
		}where $(\psi ,\varphi ,G)\in \Psi \times \Phi _{u}\ \times C$
	\end{definition}
	
	\begin{definition}
		\label{def:1.32024+328+5}Let $(\bf{Z},||.||)$ be a normed linear space. A mapping 
		$P:\bf{Z}\rightarrow \bf{Z}$ is said to be an enriched Jungck-type Hardy-Rogers $C-$%
		class contraction if for any $\delta \in \lbrack 0,\infty )$ and
		non-negative $c_{i}\in \mathbb{R}_+$, ($i=1,2,$\textperiodcentered $\ $%
		\textperiodcentered $\ $\textperiodcentered $\ ,5$) with$%
		\sum_{i=1}^{5}c_{i}=1$, there is a mapping $S:\bf{Z}\rightarrow \bf{Z}$ which commutes
		with operator $P,$ such that $\forall~ u,v\in \bf{Z}$ we have\textrm{%
\begin{eqnarray}
\psi (||\delta (Su-Sv)+Pu-Pv||) &\leq &G\Big(\psi
\big(c_{1}||Su-Sv||+c_{2}||Su-Pu||+c_{3}||Su-Pv||\notag \\
&+&c_{4}||Sv-Pu||+c_{5}||Sv-Pv||\big),\notag\\
&&\varphi
\big(c_{1}||Su-Sv||+c_{2}||Su-Pu||+c_{3}||Su-Pv||\notag \\
&+&c_{4}||Sv-Pu||+c_{5}||Sv-Pv|\big)\Big).\label{5n}
\end{eqnarray}%
}where $(\psi ,\varphi ,G)\in \Psi \times \Phi _{u}\ \times C$
\end{definition}
\begin{remark}
If in \ref{def:1.32024+328+4}\\ $RHS =c_{1}||u-v||+c_{2}||u-Pu||+c_{3}||u-Pv||
+c_{4}||v-Pu||+c_{5}||v-Pv||$ then, it reduces to definition \ref{def:1.32024+328+2} (i.e $G, \psi$ and $\varphi$ are identity operator).\\ 
If in \ref{def:1.32024+328+5},~~~ $RHS =c_{1}||Su-Sv||+c_{2}||Su-Pu||+c_{3}||Su-Pv||
+c_{4}||Sv-Pu||+c_{5}||Sv-Pv||$ then, it reduces to definition \ref{def:1.32024+328+3} (i.e $G, \psi$ and $\varphi$ are identity operator). 
\end{remark}
\section{Main Results}
	
	\begin{theorem}
		\label{t24+1th1}Let $(B,||.||)$ be a real Banach space and $f:B\rightarrow B$
		a convex mapping satisfying definition \ref{4n}. Then, f has a unique fixed
		point $u^{\ast }$ in B.
	\end{theorem}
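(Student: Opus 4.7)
The plan is to form the averaged operator $f_\lambda$ with $\lambda = 1/(\delta+1) \in (0,1]$, apply Lemma~\ref{lem:1.32024+328+4} to reduce the problem to finding a fixed point of $f_\lambda$, and then run a Picard iteration for $f_\lambda$ using the $C$-class machinery as the convergence tool. The key identity
$$f_\lambda(u) - f_\lambda(v) = \lambda\bigl[\delta(u-v) + f u - f v\bigr]$$
shows that (\ref{4n}) encodes a contractive condition directly on $f_\lambda$, so it suffices to exhibit a unique fixed point of $f_\lambda$.

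Starting from an arbitrary $x_0 \in B$, set $x_{n+1} = f_\lambda(x_n)$ and $a_n = \|x_{n+1}-x_n\|$. The identities $\|x_n - f x_n\| = (\delta+1) a_n$ and $\|x_{n+1} - f x_n\| = \delta a_n$, together with a triangle estimate for $\|x_n - f x_{n+1}\|$, let me apply (\ref{4n}) to the pair $(x_n, x_{n+1})$ to obtain $\psi((\delta+1) a_{n+1}) \leq G(\psi(R_n), \varphi(R_n))$ with $R_n = \alpha a_n + \beta a_{n+1}$ for explicit $\alpha, \beta$. The standard $C$-class trap now kicks in: if $(\delta+1)a_{n+1} > R_n$, then by monotonicity of $\psi$ and $G(s,t)\leq s$ one has $\psi(R_n) \leq \psi((\delta+1)a_{n+1}) \leq G(\psi(R_n), \varphi(R_n)) \leq \psi(R_n)$, so equality holds throughout and Definition~\ref{def:1.1} forces $R_n = 0$, a contradiction. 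Hence $(\delta+1)a_{n+1} \leq R_n$, and unfolding $\alpha, \beta$ together with $\sum c_i = 1$ yields $a_{n+1} \leq a_n$.

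Since $\{a_n\}$ is then monotone and bounded, $a_n \to a^\star \geq 0$; taking limits in the functional inequality (using continuity of $\psi,\varphi,G$) and reapplying the $C$-class trap forces $a^\star = 0$. If $\{x_n\}$ fails to be Cauchy, Lemma~\ref{lem:2.1} produces $\varepsilon>0$ and subsequences whose five limit identities, inserted into (\ref{4n}), drive the same trap to yield $\varepsilon = 0$. By completeness $x_n \to u^\star$, and applying (\ref{4n}) to $(u^\star, x_n)$ and passing to the limit gives $f_\lambda(u^\star) = u^\star$, hence $f(u^\star) = u^\star$. For uniqueness, if $u^\star, w^\star$ are both fixed, (\ref{4n}) evaluated at $(u^\star, w^\star)$ collapses to
$$\psi\bigl((\delta+1)\|u^\star - w^\star\|\bigr) \leq G\Bigl(\psi\bigl((c_1+c_3+c_4)\|u^\star - w^\star\|\bigr),\; \varphi\bigl((c_1+c_3+c_4)\|u^\star - w^\star\|\bigr)\Bigr),$$
and since $c_1+c_3+c_4 \leq 1 \leq \delta+1$, the $C$-class trap closes once more to give $u^\star = w^\star$.

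The main obstacle is the monotonicity $a_{n+1}\leq a_n$: because the hypothesis is the relaxed $\sum c_i = 1$ rather than the classical $\sum c_i < 1$, no contraction ratio is available for free, and one must exploit both $G(s,t)\leq s$ and the rigidity of equality in the $C$-class definition, combined with a careful coefficient rearrangement keyed to $\sum c_i = 1$. Once this single estimate is in hand, the remaining ingredients (continuity, triangle inequality, completeness, and Lemma~\ref{lem:2.1}) close the argument in the standard $\psi$-$\varphi$-$G$ fashion.
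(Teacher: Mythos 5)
Your overall strategy coincides with the paper's: pass to the averaged operator with parameter $1/(\delta+1)$, iterate it, use the $C$-class trap to force the consecutive differences to vanish, invoke the non-Cauchy subsequence machinery (Lemma \ref{lem:2.1}), and finish with Lemma \ref{lem:1.32024+328+4} plus a uniqueness computation. One point where you are actually cleaner than the paper: you work from the exact identity $f_\lambda u-f_\lambda v=\lambda[\delta(u-v)+fu-fv]$ applied to consecutive Picard iterates of $f_\lambda$, whereas the paper applies the contraction to the pair $(u,fu)$ and then needs $f_c^2u=(1-c)fu+cf^2u$, which it attributes to ``convexity'' of $f$ but which is false in general, since $f_c^2u=(1-c)f_cu+cf(f_cu)$. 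Your route avoids that defect, and your uniqueness and fixed-point-of-the-limit arguments go through.

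However, your pivotal step --- the monotonicity $a_{n+1}\le a_n$ --- has a genuine gap, and it is exactly the gap the paper hides. The trap does give $(\delta+1)a_{n+1}\le R_n$ with $R_n=\alpha a_n+\beta a_{n+1}$, where your identities and the triangle bound $\|x_n-fx_{n+1}\|\le a_n+(\delta+1)a_{n+1}$ force $\alpha=c_1+c_2(\delta+1)+c_3+c_4\delta$ and $\beta=(c_3+c_5)(\delta+1)$. Rearranging yields $(\delta+1)(c_1+c_2+c_4)\,a_{n+1}\le\bigl[c_1+c_2(\delta+1)+c_3+c_4\delta\bigr]a_n$, and the resulting ratio is $\le 1$ if and only if $c_3\le\delta c_1+c_4$, which does \emph{not} follow from $\sum_{i=1}^5 c_i=1$ (take $\delta=0$, $c_3=0.5$, $c_4=0.1$, $c_2=0.4$: the ratio is $1.8$). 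No further appeal to the ``rigidity of equality'' in Definition \ref{def:1.1} can repair this, because the trap has already been spent in producing $(\delta+1)a_{n+1}\le R_n$; what remains is pure coefficient arithmetic, and it fails. The paper confronts the same obstruction at inequality (\ref{3en}) and escapes by declaring ``without loss of generality $c_3=c_4$,'' which is an additional hypothesis in disguise, not a WLOG. A secondary issue common to both arguments: replacing the exact quantity inside $G$ by its upper bound $R_n$ requires the triple $(\psi,\varphi,G)$ to be monotone in the sense of Definition \ref{def:1.4}, which the theorem does not assume. So your proposal reproduces the paper's architecture, improves one technical identity, but does not close --- and without a further restriction on the $c_i$ or on the triple cannot close --- the central estimate.
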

	
	\begin{proof}
		For a generic point $u_{1}$ in X, we define the sequence $\{u_{n}\}_{n\in \mathbb{N}},$ by Schaefer iteration $$u_n= (1-c)u_{n-1}+cfu_{n-1},~ \forall ~n\geq 1.$$ From (\ref{4n}), let $v=fu$, we have
			\begin{eqnarray*}
				&&\psi (||\delta (u-fu)+fu-f^{2}u||) \\
				&\leq &G(\psi
				(c_{1}||u-fu||+c_{2}||u-fu||+c_{3}||u-f^{2}u||+c_{4}||fu-fu||+c_{5}||fu-f^{2}u||),
				\\
				&&\varphi
				(c_{1}||u-fu||+c_{2}||u-fu||+c_{3}||u-f^{2}u||+c_{4}||fu-fu||+c_{5}||fu-f^{2}u||)),
				\end{eqnarray*}
				then, we have
		\begin{align}
			& ||\delta (u-fu)+fu-f^{2}u||  \notag \\
			& \leq c_{1}||u-fu||+c_{2}||u-fu||+c_{3}||u-f^{2}u||+c_{5}||fu-f^{2}u||)
			\label{6nn1}
		\end{align}%
		When $\delta >0$, and for any non-negative c. Let $c=\frac{1}{1+\delta }%
		<1\Longrightarrow \delta =\frac{1-c}{c}$ then inequality (\ref{6nn1})
		becomes:%
		\begin{align}
			& ||\frac{1-c}{c}(u-fu)+fu-f^{2}u||  \notag \\
			& \leq c_{1}||u-fu||+c_{2}||u-fu||+c_{3}||u-f^{2}u||+c_{5}||fu-f^{2}u||)
			\label{8nn1}
		\end{align}
		
		($f_{c}u=(1-c)u+cfu$ , \ $f_{c}^{2}u=(1-c)fu+cf^{2}u$, since $f$ is a
		convex mapping.) \textrm{%
			\begin{eqnarray*}
				||u-f_{c}u|| &=&||u-(1-c)u-cfu||=|u-u+cu-cfu||=||cu-cfu||=c||u-fu||, \\
				&\Longrightarrow & \\
				||u-f_{c}u|| &=&c||u-fu||
			\end{eqnarray*}%
		} Hence, inequality (\ref{8nn1}) could be written as:%
		\begin{align}
	 ||f_{c}u-f_{c}^{2}u|| \leq c_{1}||u-f_{c}u||+c_{2}||u-f_{c}u||+c_{3}||u-f_{c}^{2}u||+c_{5}||f_{c}u-f_{c}^{2}u||
			\label{9n1}
		\end{align}
		
		Therefore, from inequality (\ref{9n1}), we have that%
		\begin{eqnarray*}
			(1-c_{5})||f_{c}u-f_{c}^{2}u|| &\leq
			&(c_{1}+c_{2})||u-f_{c}u||+c_{3}||u-f_{c}^{2}u|| \\
			&\leq &(c_{1}+c_{2})||u-f_{c}u||+c_{3}(||u-f_{c}u||+||f_{c}u-f_{c}^{2}u||)
		\end{eqnarray*}
		
		so,%
		\begin{eqnarray}
			(1-c_{5}-c_{3})||f_{c}u-f_{c}^{2}u|| &\leq &(c_{1}+c_{2}+c_{3})||u-f_{c}u||, \label{3en}
		\end{eqnarray}
		since $c_i$ are arbitrary positive real numbers (each) less than $1$, without loss of generality we can choose our $c_3=c_4.$ Therefore, we have from the inequality above that
		\begin{eqnarray}
			||f_{c}u-f_{c}^{2}u|| &\leq &||u-f_{c}u|| \label{eqn29} \\
			&\mbox{then, we have} & \notag\\
			||f_{c}^{2}u-f_{c}^{3}u|| &\leq &||f_{c}u-f_{c}^{2}u||\leq ||u-f_{c}u|| \notag\\
			&&\vdots \notag\\
			||f_{c}^{n}u-f_{c}^{n+1}u|| &\leq & \cdots \leq ||f_{c}^{2}u-f_{c}^{3}u||\leq
			||f_{c}u-f_{c}^{2}u||\leq ||u-f_{c}u|| \notag 
		\end{eqnarray}%
		so, $u_n$ is a monotone decreasing, then there is an $r \in \mathbb{R}$ such that%
		\begin{equation*}
			||f_{c}^{n}u-f_{c}^{n+1}u||\rightarrow r\quad \text{as}\quad n\rightarrow
			\infty .
		\end{equation*}
		
		From (\ref{4n}) and the above,
\begin{eqnarray*}
\psi (||f_{c}^{n}u-f_{c}^{n+1}u||) 
&\leq &G\Big[\psi \big(
c_{1}||f_c^{n-1}u-f_c^{n}u||+c_{2}||f_c^{n-1}u-f_c^{n}u||+c_{3}||\\
&&f_c^{n-1}u-f_c^{n+1}u||+c_{5}||f_c^{n}u-f_c^{n+1}u||\big), \\
&&\varphi \big(c_{1}||f_c^{n-1}u-f_c^{n}u||+c_{2}||f_c^{n-1}u-f_c^{n}u||+c_{3}||\\
&&f_c^{n-1}u-f_c^{n+1}u||+c_{5}||f_c^{n}u-f_c^{n+1}u||\big)\Big] \\
&\mbox{so, we have} & \\
\psi (r) &\leq &G \Big(\psi (r),\varphi (r)\Big)\leq
\psi (r)
\end{eqnarray*}
thus, from Definitions \ref{def:1.1} and \ref{def:1.4},\\
$\psi (r)=0,$ $\ $or $\ \varphi (r)=0,$  that is $%
r=0, $ hence,%
\begin{equation}
			||f_{c}^{n}u-f_{c}^{n+1}u||\rightarrow 0\quad \text{as}\quad n\rightarrow
			\infty . \label{eqn30}
		\end{equation}
		And, we write 
		\begin{equation*}
			\psi \big(||f_{c}^{n}u-f_{c}^{n+1}u||\big) \leq G\Big(\psi \big(||f_c^{n-1}u-f_c^{n}u||\big),\varphi \big(||f_c^{n-1}u-f_c^{n}u||\big)\Big)\leq
			\psi \big(||f_c^{n-1}u-f_c^{n}u||\big) 
		\end{equation*}
		i.e
		\begin{equation}
			\psi \big(||u_n-u_{n+1}||\big) \leq G\Big(\psi \big(||u_{n-1}-u_{n}||\big),\varphi \big(||u_{n-1}-u_{n}||\big)\Big)\leq
			\psi \big(||u_{n-1}-u_{n}||\big) \label{eqn31}
		\end{equation}
		
		Next we show that $\left\{u_n\right\}$ is a Cauchy sequence. Suppose not, then there exists $\epsilon > 0$ such that for some sub-sequences $\{u_{n_k}\}$ and $\{u_{m_k}\}$ of $\{u_n\}$ where $n_k > m_k > k,$ we have
		\begin{equation}
			||u_{n_k} - u_{m_k}|| \geq \epsilon \label{eqn33}
		\end{equation} 
		so, we can choose $n_k$ depending on $m_k$ in a way that the choosing $n_k$ is the smallest integer with $n_k > m_k$ and this in line with \ref{eqn33} gives
		\begin{equation}
			||u_{n_{k-1}}-u_{m_k}|| < \epsilon \label{eqn34}
		\end{equation}
		Then, we can say
		\begin{eqnarray}
			0<\epsilon \leq ||u_{n_k} - u_{m_k}|| &\leq& ||u_{n_k} - u_{n_{k-1}}|| + ||u_{n_{k-1}}-u_{m_k}|| \notag\\
			&<& ||u_{n_k} - u_{n_{k-1}}|| + \epsilon. \label{eqn35}
		\end{eqnarray}
		Taking limit as $k \to \infty$ together with \ref{eqn30} and Lemma (1.10), we have
		\begin{equation}
			\lim_{k \to \infty}||u_{n_k} - u_{m_k}|| = \epsilon, \label{eqn36}
		\end{equation}
		and
		\begin{eqnarray}
			||u_{n_k} - u_{m_k}|| &\leq& ||u_{n_k} - u_{n_{k-1}}|| + ||u_{n_{k-1}}- u_{m_{k-1}}|| + ||u_{m_{k-1}}- u_{m_{k}}|| \notag \\
			&\leq& ||u_{n_k} - u_{n_{k-1}}|| + ||u_{n_{k-1}}- u_{n_{k}}|| + ||u_{n_{k}}- u_{m_{k-1}}|| + ||u_{m_{k-1}}-u_{m_k}|| \notag \\
			&\leq& ||u_{n_k} - u_{n_{k-1}}|| + ||u_{n_{k-1}}- u_{n_{k}}|| + ||u_{n_{k}}- u_{m_k}||\notag \\
			&+& ||u_{m_k}-u_{m_{k-1}}|| + ||u_{m_{k-1}}- u_{m_{k}}||,
		\end{eqnarray}
		using \ref{eqn30} and \ref{eqn36} as $k \to \infty,$ we have
		\begin{equation}
			\epsilon = \lim_{k \to \infty} ||u_{n_{k-1}}- u_{m_{k-1}}||. \label{eqn40}
		\end{equation}
		Also, using \ref{eqn29}, we can write 
		\begin{equation}
			||u_n - u_{n+1}|| \leq ||u_{n-1}-u_n|| \label{eqn37}
		\end{equation}
		Therefore, using (\ref{eqn37}) in (\ref{4n}) with $u=u_{n_{k}}$ and $v=u_{m_{k}},$ together with (\ref{eqn31}), gives
		\begin{eqnarray}
			\psi (||u_{n_k}-u_{m_k}||) &\leq& G\Big(\psi (||u_{n_k}-u_{m_k}||),\varphi (||u_{n_k}-u_{m_k}||)\Big)\notag \\
			&\leq& G\Big(\psi (||u_{n_{k-1}}- u_{m_{k-1}}||),\varphi (||u_{n_{k-1}}- u_{m_{k-1}}||)\Big)\notag \\
			&\leq& \psi (||u_{n_{k-1}}- u_{m_{k-1}}||)
		\end{eqnarray}
		taking limit as $k \to \infty,$ and using equations \ref{eqn36} and \ref{eqn40}, we have
		\begin{equation}
			\psi(\epsilon) \leq G\Big(\psi(\epsilon), \varphi (\epsilon)\Big) \leq \psi(\epsilon)
		\end{equation}
		thus, $\psi (\epsilon)=0,$ $\ $or $\ \varphi (\epsilon)=0,$ since $\psi$ is altering distance function, we get a contradiction.\\
		Hence, sequence $\{u_n\}$ is Cauchy and so converges to certain $u^* \in B.$
		
		\par We claim that $u^*=f_cu^*.$ Then using triangle inequality, Definition \ref{def:1.32024+328+4} together with (\ref{6nn1}) and (\ref{9n1}), we have 
		\begin{eqnarray*}
			\psi(||u^*-f_cu^*||) &\leq& \psi(||u^*-f_c^{n+1}u|| + ||f_c^{n+1}u-f_cu^*||)\\
			& =& \psi(||u^*-f_c^{n+1}u||) + \psi(||f_c^{n+1}u-f_cu^*||)\\
			&\leq& \psi(||u^*-f_c^{n+1}u||) +G\Big[\psi \Big(c_1||f_c^{n}u-u^*||+c_2||f_c^nu-f_c^{n+1}u||\\
			&+& c_3||f_c^nu-f_cu^*||+c_4||u^*-f_c^{n+1}u||+c_5||u^*-f_cu^*||\Big),\\
			&&\varphi \Big(c_1||f_c^{n}u-u^*||+c_2||f_c^nu-f_c^{n+1}u||+c_3||f_c^nu-f_cu^*||\\
			&+&c_4||u^*-f_c^{n+1}u||+c_5||u^*-f_cu^*||\Big)\Big]\\
			&&\mbox{taking limit as}~~ n \to \infty \\
			&=& \psi(||u^*-u^*||) +G\Big[\psi \Big(c_1||u^*-u^*||+c_2||u^*-u^*||\\
			&+& c_3||u^*-f_cu^*||+c_4||u^*-u^*||+c_5||u^*-f_cu^*||\Big),\\
			&&\varphi \Big(c_1||u^*-u^*||+c_2||u^*-u^*||+c_3||u^*-f_cu^*||\\
			&+&c_4||u^*-u^*||+c_5||u^*-f_cu^*||\Big)\Big]\\
			&=& G\Big[\psi \Big((c_3+c_5)||u^*-f_cu^*||\Big),\varphi \Big((c_3+c_5)||u^*-f_cu^*||\Big)\Big]\\
			&\leq&  \psi \Big((c_3+c_5)||u^*-f_cu^*||\Big),
		\end{eqnarray*}
		that is 
		$$\psi(||u^*-f_cu^*||) \leq  \psi \Big((c_3+c_5)||u^*-f_cu^*||\Big),$$
		since $\psi$ is a altering distance function and clearly $c_3+c_5 < 1,$ then we conclude that 
		$$u^*=f_cu^*.$$
		And by Lemma \ref{lem:1.32024+328+4}, we have $$u^*=f_cu^*=fu^*.$$ 
		For the uniqueness part (we prove by contradiction). Suppose not, then\\ $Fix_{f_c}=\{u^*, v^*\}$ i.e $u^* \neq v^*$ and $f_cu^*=u^*;~~f_cv^*=v^*.$\\
		We have by Definition \ref{def:1.32024+328+4} that
		\begin{eqnarray*}
			0&<&\psi(||u^*-v^*||)= \psi(||f_cu^*-f_cv^*||)\\
			&\leq& G\Big[\Big(\psi(c_1||u^*-v^*||+c_2||u^*-f_cu^*||+c_3||u^*-f_cv^*||+c_4||v^*-f_cu^*||+c_5||v^*-f_cv^*||\Big),\\
			&&\Big(\varphi (c_1||u^*-v^*||+c_2||u^*-f_cu^*||+c_3||u^*-f_cv^*||+c_4||v^*-f_cu^*||+c_5||v^*-f_cv^*||\Big)\Big]\\
			&=& G\Big[\Big(\psi(c_1||u^*-v^*||+c_2||u^*-u^*||+c_3||u^*-v^*||+c_4||v^*-u^*||+c_5||v^*-v^*||\Big),\\
			&&\Big(\varphi (c_1||u^*-v^*||+c_2||u^*-u^*||+c_3||u^*-v^*||+c_4||v^*-u^*||+c_5||v^*-v^*||\Big)\Big]\\
			&=& G\Big[\psi \Big((c_1+c_3+c_4)||u^*-v^*||\Big),\varphi \Big((c_1+c_3+c_4)||u^*-v^*||\Big)\Big]\\
			&\leq&  \psi \Big((c_1+c_3+c_4)||u^*-v^*||\Big),
		\end{eqnarray*}
		but $c_1+c_3+c_4 < 1.$ Therefore, we conclude that $u^* = v^*$\\
		~~\\
		\par \textbf{CASE 2:} $\delta = 0,$ here $f_c = f$ and (\ref{9n1}) becomes
		\begin{align}
			& ||fu-f^{2}u||\leq c_{1}||u-fu||+c_{2}||u-fu||+c_{3}||u-f^{2}u||+c_{5}||fu-f^{2}u||,
		\end{align}
		while inequalities \ref{eqn30} and \ref{eqn31} respectively becoming
		\begin{equation}
			||f^{n}u-f^{n+1}u||\rightarrow 0\quad \text{as}\quad n\rightarrow
			\infty,
		\end{equation}
		\begin{equation}
			\psi \big(||u_n-u_{n+1}||\big) \leq G\Big(\psi \big(||u_{n-1}-u_{n}||\big),\varphi \big(||u_{n-1}-u_{n}||||\big)\Big)\leq
			\psi \big(||u_{n-1}-u_{n}||\big)
		\end{equation}
		the rest of the prove is similar to that of Case 1.\\
		Hence, we have $fu^* = u^*,$ this together with the conclusion of Case 1 gives $$f_cu^*=fu^*=u^*.$$
	\end{proof}
	\begin{theorem}
		\label{t24+1th1}Let $(B,||.||)$ be a real Banach space and $S,f:B\rightarrow B$
		be commuting convex mappings satisfying definition \ref{def:1.32024+328+5}, with $f(B)\subset S(B).$ Then, $S$ and $f$ have a unique common fixed point (say $u^{\ast }$) in B.
	\end{theorem}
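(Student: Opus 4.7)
The plan is to mirror the structure of the preceding theorem, but to replace its single-operator averaged map $f_c$ with a Jungck-type averaged iteration built from both $S$ and $f$. Two new ingredients enter relative to the proof above: the inclusion $f(B)\subseteq S(B)$, which keeps the iteration well-defined, and the commutativity $Sf=fS$, which will promote a coincidence point into a genuine common fixed point.

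Fix $c=\tfrac{1}{1+\delta}\in(0,1]$ so that $\delta=\tfrac{1-c}{c}$, choose an arbitrary $u_{0}\in B$, and using $f(B)\subseteq S(B)$ together with the convexity of $S(B)$ (supplied by the convex-mapping hypothesis on $S$) select $u_{n+1}\in B$ satisfying $Su_{n+1}=(1-c)Su_{n}+c\,fu_{n}$. Applying \eqref{5n} at $(u,v)=(u_{n},u_{n+1})$ and dividing through by $c$, exactly as in the passage from \eqref{6nn1} to \eqref{9n1}, yields the Jungck analogue $\|Su_{n+1}-Su_{n+2}\|\le\|Su_{n}-Su_{n+1}\|$ after the same normalisation $c_{3}=c_{4}$ used in the earlier proof. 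Thus $\{\|Su_{n}-Su_{n+1}\|\}$ decreases to some $r\ge 0$; feeding this back into \eqref{5n} and invoking Definitions \ref{def:1.1} and \ref{def:1.4} forces $r=0$, and consequently $\|Su_{n}-fu_{n}\|=\tfrac{1}{c}\|Su_{n+1}-Su_{n}\|\to 0$ as well.

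Next I would show that $\{Su_{n}\}$ is Cauchy by the same contradiction argument as before: if not, Lemma \ref{lem:2.1} extracts subsequences with $\|Su_{m(k)}-Su_{n(k)}\|\to\varepsilon>0$, and applying \eqref{5n} at $(u,v)=(u_{n(k)},u_{m(k)})$ and passing to the limit produces $\psi(\varepsilon)\le G(\psi(\varepsilon),\varphi(\varepsilon))\le\psi(\varepsilon)$, which contradicts $\varepsilon>0$ by the $C$-class and altering-distance properties. Completeness of $B$ then gives $Su_{n}\to z$ for some $z\in B$, and from $\|Su_{n}-fu_{n}\|\to 0$ also $fu_{n}\to z$. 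Using $f(B)\subseteq S(B)$, choose $u^{\ast}\in B$ with $Su^{\ast}=z$; plugging $(u,v)=(u^{\ast},u_{n})$ into \eqref{5n} and letting $n\to\infty$ yields $fu^{\ast}=Su^{\ast}=z$, a coincidence point. The commutativity $Sf=fS$ then gives $Sz=Sfu^{\ast}=fSu^{\ast}=fz$; applying \eqref{5n} at $(u,v)=(z,u^{\ast})$ together with the sum condition on the $c_{i}$ forces $Sz=z=fz$, so $z$ is the claimed common fixed point. Uniqueness is exactly the final computation of the preceding theorem applied to two hypothetical common fixed points, using $c_{1}+c_{3}+c_{4}<1$. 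The degenerate case $\delta=0$ reduces to the classical Jungck iteration $Su_{n+1}=fu_{n}$ and is handled just as in Case 2 of that proof.

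The main obstacle I anticipate is making the Jungck-averaged iteration rigorously well-defined: one has to ensure that $(1-c)Su_{n}+c\,fu_{n}$ lies in $S(B)$ at every step, which is where the convex-mapping hypothesis on $S$ is genuinely needed to keep $S(B)$ stable under such convex combinations, and one must also track the scalar $\tfrac{1}{c}$ through each application of \eqref{5n} so that the Jungck estimates line up with the single-operator estimates established earlier. Aside from this bookkeeping and the extra commutativity step used to promote the coincidence point $u^{\ast}$ to the common fixed point $z=Su^{\ast}$, the argument is a faithful Jungck-type transcription of the proof of the preceding theorem.
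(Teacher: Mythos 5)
Your proposal is correct in outline and shares the paper's skeleton up to the Cauchy stage --- the Jungck--Schaefer iteration $Su_{n+1}=(1-c)Su_n+cfu_n$, the monotone decrease of $\|Su_n-Su_{n+1}\|$ to some $r$, the use of the $C$-class identity $\psi(r)\le G(\psi(r),\varphi(r))\le\psi(r)$ to force $r=0$, and the contradiction argument via Lemma \ref{lem:2.1} --- but your endgame is genuinely different from the paper's, and in my view cleaner. The paper never extracts a coincidence point: after obtaining the limit $u^{\ast}$ it runs a triangle-inequality estimate to bound $\|Su^{\ast}-f_cu^{\ast}\|$ by $(1+c_1+c_3+c_4)\|Su^{\ast}-Sv^{\ast}\|$, where $v^{\ast}$ is introduced without definition, then asserts ``since $u^{\ast}=v^{\ast}$, by our assumption'' and concludes that the inequality ``holds only for the equality part with $c_1=c_3=c_4=0$'' --- a condition that is not among the hypotheses, and an argument in which neither the commutativity of $S$ and $f$ nor the inclusion $f(B)\subseteq S(B)$ is ever visibly used. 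You instead follow the classical Jungck route: use $f(B)\subseteq S(B)$ to pick $u^{\ast}$ with $Su^{\ast}=z=\lim Su_n=\lim fu_n$, substitute $(u,v)=(u^{\ast},u_n)$ into \eqref{5n} and pass to the limit to get the coincidence $fu^{\ast}=Su^{\ast}=z$, then invoke $Sf=fS$ to get $Sz=fz$ and a final application of \eqref{5n} at $(z,u^{\ast})$ to force $Sz=fz=z$. This makes every hypothesis earn its keep and avoids the paper's unsupported final step, so it is the preferable argument. Two caveats you should still address if you write this up in full: (i) the well-definedness of the iteration needs $S(B)$ to be closed under the convex combination $(1-c)Su_n+cfu_n$, and the paper's notion of ``convex mapping'' (stated only for maps $\mathbb{R}^n\to\mathbb{R}$) does not literally deliver this, so you are inheriting, not resolving, a gap in the hypotheses; and (ii) several of your reductions (e.g.\ ``dividing through by $c$'' inside $\psi$, and passing from $\psi(a)\le\psi(b)$ to $a\le b$) require either that $\psi$ be strictly increasing or that you argue through the monotone-triple property of Definition \ref{def:1.4}; the paper commits the same informality, but it should be made explicit.
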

	
	\begin{proof}
		For a generic point $u_{0}$ in B, we define the sequence $\{Su_n\}_{n\in \mathbb{N}}\Big(\{fu_{n}\}=\{Su_{n+1}\}\Big)$ by Jungck-Schaefer iteration 
		$$Su_{n+1}=(1-c)Su_n+cfu_n~\forall~ n\geq 0.$$
		And let $fv=Su$, for any $v>u$. Using definition (\ref{def:1.32024+328+5}), we have
		\textrm{%
			\begin{eqnarray*}
				&&\psi (||\delta (Su-Sv)+fu-fv||) \\
				&\leq &G\Big(\psi
				(c_{1}||Su-Sv||+c_{2}||Su-fu||+c_{3}||Su-fv||+c_{4}||Sv-fu||+c_{5}||Sv-fv||),
				\\
				&&\varphi
				(\psi
				(c_{1}||Su-Sv||+c_{2}||Su-fu||+c_{3}||Su-fv||+c_{4}||Sv-fu||+c_{5}||Sv-fv||)\Big)
				\\
				&= &G\Big(\psi
				(c_{1}||Su-Sv||+c_{2}||fv-fu||+c_{3}||fv-fv||+c_{4}||Sv-fu||+c_{5}||Sv-Su||),
				\\
				&&\varphi
				(\psi
				(c_{1}||Su-Sv||+c_{2}||fv-fu||+c_{3}||fv-fv||+c_{4}||Sv-fu||+c_{5}||Sv-Su||)\Big)
				\\
				&=&G\Big(\psi
				((c_{1}+c_{5})||Su-Sv||+c_{4}||Sv-fu||+c_{2}||fu-fv||),
				\\
				&&\varphi
				((c_{1}+c_{5})||Su-Sv||+c_{4}||Sv-fu||+c_{2}||fu-fv||)\Big)
				\\
				&\leq&G\Big(\psi
				((c_{1}+c_{5})||Su-Sv||+c_{4}(||Su-Sv||+||Su-fu||)+c_{2}||fu-fv||),
				\\
				&&\varphi
				((c_{1}+c_{5})||Su-Sv||+c_{4}(||Su-Sv||+||Su-fu||)+c_{2}||fu-fv||)\Big)
				\\
				&=&G\Big(\psi
				((c_{1}+c_{4}+c_5)||Su-Sv||+c_{4}||fv-fu||+c_{2}||fu-fv||),
				\\
				&&\varphi
				((c_{1}+c_{4}+c_5)||Su-Sv||+c_{4}||fv-fu||+c_{2}||fu-fv||)\Big)
				\\
				&=&G\Big(\psi
				((c_{1}+c_{4}+c_{5})||Su-Sv||+(c_2+c_{4})||fu-fv||),
				\\
				&&\varphi
				((c_{1}+c_{4}+c_{5})||Su-Sv||+(c_2+c_{4})||fu-fv||)\Big)
				\\
				&\Longrightarrow &
			\end{eqnarray*}%
		}%
		\begin{align}
			||\delta (Su-Sv)+fu-fv||   \leq (c_{1}+c_{4}+c_{5})||Su-Sv||+(c_2+c_{4})||fu-fv||
			\label{10nn1}
		\end{align}%

		When $\delta =0$, then inequality (\ref{10nn1})
		becomes:%
		\begin{align}
			||fu-fv||  \leq (c_{1}+c_{4}+c_{5})||Su-Sv||+(c_2+c_{4})||fu-fv||
			\label{11nn1}
		\end{align}
		
		Therefore, from inequality (\ref{11nn1}), we have that%
		\begin{eqnarray*}
			(1-c_2-c_{4})||fu-fv|| &\leq
			&(c_{1}+c_{4}+c_{5})||Su-Sv||, \\
		\end{eqnarray*}
		
		so, since $c_i$ are arbitrary positive real numbers (each) less than $1$, without loss of generality we can choose our $c_3=c_4.$ Therefore, we have from the inequality above that	
		\begin{eqnarray}
			||fu-fv|| &\leq &||Su-Sv|| \notag \\
			&\Longrightarrow & \notag \\
			||f^{2}u-f^{2}v|| &\leq &||fu-fv||\leq ||Su-Sv|| \notag \\
			&& \vdots \notag \\
			&\Longrightarrow &  \notag \\
			||f^{n}u-f^{n+1}u||= ||f^{n}u-f^nv|| &\leq & \cdots \leq ||f^{3}u-f^{3}v||\leq
			||f^2u-f^{2}v|| \notag \\
			&\leq& ||fu-fu|| \leq ||Su-Sv||, \label{12nn1} 
		\end{eqnarray}
		so%
		\begin{equation*}
			||f^{n}u-f^{n+1}u||\rightarrow r\quad \text{as}\quad n\rightarrow
			\infty .
		\end{equation*}
		Using definition (\ref{def:1.32024+328+5}) and inequality (\ref{12nn1}), with $\delta =0,$ we arrive at
		$$	\psi (r) \leq G(\psi (r),\varphi (r))\leq \psi (r),$$
		thus, $\psi (r)=0,$ $\ $or $\ \varphi (r)=0,$ that is $%
		r=0, $ ~~~this implies that
		\begin{equation*}
			||f^{n}u-f^{n+1}u||\rightarrow 0\quad \text{as}\quad n\rightarrow
			\infty .
		\end{equation*}
		Next we show that $\{ f^nu_0 \}$ is a Cauchy sequence. This follows the same process as we have in the prove of Theorem 3.3.\\
		Thus, $\{ f^nu_0 \}$ is a Cauchy sequence and so converges to some $u^* \in B.$\\	
		~~\\
		When $\delta >0$, for any non-negative $c.$ Let $c=\frac{1}{1+\delta }%
		<1\Longrightarrow \delta =\frac{1-c}{c}$ then, using Jungck-Schaefer iteration we have,\\
		$$||\frac{1-c}{c}(Su-Sv)+fu-fv||  \leq (c_{1}+c_4+c_5)||Su-Sv||+(c_{2}+c_4)||fu-fv||$$
		$$||(1-c)(Su-Sv)+cfu-cfv|| \leq c(c_{1}+c_4+c_5)||Su-Sv||+c(c_{2}+c_4)||fu-fv||$$
		$$||(1-c)Su-(1-c)Sv+cfu-cfv|| \leq c(c_{1}+c_4+c_5)||Su-Sv||+c(c_{2}+c_4)||fu-fv||$$
		$$ ||(1-c)Su+cfu-(1-c)Sv+cfv||\leq c(c_{1}+c_4+c_5)||Su-Sv||+c(c_{2}+c_4)||fu-fv||$$
		
		$||f_cu-f_cv|| \leq c(c_{1}+c_4+c_5)||Su-Sv||+(c_{2}+c_4)||f_cu-f_cv||$\\
		~~\\
		Therefore
		\begin{eqnarray*}
			(1-c_2-c_4)||f_cu-f_cv|| &\leq& c(c_1+c_4+c_5)||Su-Sv|| 
		\end{eqnarray*}
		\mbox{so, since} $c,~and~c_i$ \mbox{are arbitrary positive real numbers (each) less than} $1$, \mbox{without loss of generality,}\\ \mbox{we can choose our} $c_3=c_4.$ \mbox{Therefore, we have from the inequality above that}	
		\begin{eqnarray}
			||f_cu-f_{c}v|| &\leq&||Su-Sv|| \label{11n3} 
		\end{eqnarray}
		
		so \ref{11n3} implies,%
		\begin{eqnarray*}
			||f_{c}^{2}u-f_{c}^{2}v|| &\leq &||f_{c}u-f_{c}v||\leq ||Su-Sv|| \\
			&&\vdots \\
			&\Longrightarrow & \\
			||f_{c}^{n}u-f_{c}^{n+1}u||=||f_{c}^{n}u-f_{c}^nv|| &\leq &...\leq ||f_{c}^{2}u-f_{c}^{2}v||\leq
			||f_{c}u-f_{c}v||\leq ||Su-Sv||
		\end{eqnarray*}%
		that is 
		\begin{equation} \label{424}
		||f_{c}^{n}u-f_{c}^{n+1}u|| \leq ||Su-Sv|| = ||f_cv-f_c^2v||
		\end{equation}
		so%
		\begin{equation*}
			||f_{c}^{n}u-f_{c}^{n+1}u||=||f_{c}^{n}u-f_{c}^{n}v||\rightarrow r\quad \text{as}\quad n\rightarrow
			\infty .
		\end{equation*}
		
		Using definition (\ref{def:1.32024+328+5}), in the same way, we also get
		$$\psi (r) \leq G(\psi (r),\varphi (r))\leq \psi (r)$$
		thus, $\psi (r)=0,$ $\ $or $\ \varphi (r)=0,$ that is $%
		r=0, $ this implies that
		\begin{equation*}
			||f_{c}^{n}u-f_{c}^{n+1}u||\rightarrow 0\quad \text{as}\quad n\rightarrow
			\infty .
		\end{equation*}
		\par The subsequent proof lines is similar to that of Theorem 3.3.\\
		Now, for the prove of common fixed point of $f$ and $S,$ by this, we claim that, $Su^* = f_cu^*.$ And by our previous arguments together with triangle inequality, we have:
		\begin{eqnarray*}
		&&\psi||Su^* - f_cu^*||\\
		&\leq& \psi\Big(||Su^* - f_c^2u^*|| + ||f_c^2u^*- f_cu^*||\Big)\\
		&=& \psi\Big(||Su^* - f_c^2u^*|| + ||f_cu^*- f_c(f_cu^*)||\Big)\\
		&\leq& \psi||Su^* - f_c^2u^*|| + G\Big(\psi
		(c_{1}||Su^*-S(f_cu^*)||+c_{2}||Su^*-f_cu^*||+c_{3}||Su^*-f_c(f_cu^*)||\\
		&&+c_{4}||S(f_cu^*)-f_cu^*||+c_{5}||S(f_cu^*)-f_c(f_cu^*)||),
		\\
		&&\varphi
		(\psi
		(c_{1}||Su^*-S(f_cu^*)||+c_{2}||Su^*-f_cu^*||+c_{3}||Su^*-f_c(f_cu^*)||\\
		&&+c_{4}||S(f_cu^*)-f_cu^*||+c_{5}||S(f_cu^*)-f_c(f_cu^*)||)\Big)\\
		&=&\psi||Su^* - f_c^2u^*|| + G\Big(\psi
		(c_{1}||Su^*-S(f_cu^*)||+c_{2}||Su^*-Su^*||+c_{3}||Su^*-f_c(Su^*)||\\
		&&+c_{4}||S(f_cu^*)-f_cu^*||+c_{5}||S(f_cu^*)-S(f_cu^*)||),
		\\
		&&\varphi
		(\psi
		(c_{1}||Su^*-S(f_cu^*)||+c_{2}||Su^*-Su^*||+c_{3}||Su^*-f_c(Su^*)||\\
		&&+c_{4}||S(f_cu^*)-f_cu^*||+c_{5}||S(f_cu^*)-S(f_cu^*)||)\Big)\\
		&=&\psi||Su^* - f_c^2u^*|| + G\Big(\psi
		(c_{1}||Su^*-f^2_cu^*||+c_{3}||Su^*-f^2_cu^*||+c_{4}||f^2_cu^*-Su^*||),
		\\
		&&\varphi
		(\psi
		(c_{1}||Su^*-f^2_cu^*||+c_{3}||Su^*-f^2_cu^*||+c_{4}||f^2_cu^*-Su^*||)\Big)\\
		&\leq& \psi||Su^* - f_c^2u^*|| + \psi
		(c_{1}||Su^*-f^2_cu^*||+c_{3}||Su^*-f^2_cu^*||+c_{4}||f^2_cu^*-Su^*||),
		\end{eqnarray*}
		then, we arrive at:
		\begin{eqnarray*}
			||Su^* - f_cu^*|| &\leq& (1+c_1+c_3+c_4)||Su^* - f^2_cu^*||=(1+c_1+c_3+c_4)||f_cu^* - f^2_cu^*||\\
			&\leq& (1+c_1+c_3+c_4)||Su^* -Sv^*||~using~(\ref{424}).\\
		\end{eqnarray*}
		Since $u^* = v^*,$ and by our assumption, we have
		\begin{eqnarray*}
			||Su^* - f_cu^*|| &=& ||Su^* - Su^*||\\
			&\leq& (1+c_1+c_3+c_4)||Su^* - Sv^*||=(1+c_1+c_3+c_4)||Su^* - Su^*||.
		\end{eqnarray*}
		This holds only for equality part with $c_1=c_3=c_4=0$\\
		then, we have, $Su^* = f_cu^*.$\\
		So, we have from the foregoing that,\\
		$f_cu^* = Su^* = u^*.$\\
		\end{proof}
	\section{Discussion}
	The results obtained in Theorems 4.1 and 4.2 generalized and extended some of the results contained in \cite{Ans1}, \cite{1,9,11} and some many others. 
	~~\\
	\section{Conclusion}
	\par The $C-$ class function was used in generalizing enriched Hardy-Rogers contraction and enriched Jungck-type Hardy-Rogers contraction and related fixed points and common fixed points results were established which are a generalization and extension of many recently introduced related results in literature. Finally, Our results unify, generalize and extend many known results in the literature.

	~~\\
	
		\textbf{Statements and Declarations}\\
	\textbf{Competing Interests:}
	The authors hereby declared that there are no competing interests as regards this article. The authors have no relevant financial or non-financial interests to disclose.\\

	\textbf{Data Availability Statements:}
	Not Applicable.
\end{document}